\newtheorem{lemma}{Lemma}
\newtheorem{theo}{Theorem}
\newtheorem{prop}{Proposition}
\newtheorem{cor}{Corollary}
\begin{document}

\begin{center} {\large A generalization of the Arad--Ward theorem on Hall subgroups.}\\

\vspace{5mm}

N. Yang,

Jiangnan University, Wuxi, China

A.A. Buturlakin,\footnote{buturlakin@gmail.com, corresponding author}

Sobolev Institute of Mathematics, Novosibirsk, Russia

\end{center}

%\vspace{1cm}

\textbf{Abstract.} For a set of primes $\pi$, denote by $E_\pi$ the class of finite groups containing a Hall $\pi$-subgroup. We establish that $E_{\pi_1}\cap E_{\pi_2}$ is contained in $E_{\pi_1\cap\pi_2}$. As a corollary, we prove that if $\pi$ is a set of primes, $l$ is an integer such that $2\leqslant l<|\pi|$ and $G$ is a finite group that contains a Hall $\rho$-subgroup for every subset $\rho$ of $\pi$ of size $l$, then $G$ contains a solvable Hall $\pi$-subgroup.

\textbf{Key words:} Hall subgroup, solvable group, finite simple groups.

\textbf{MSC2020:} 20D20, 20D30.

%\vspace{1cm}

\section{Introduction}

Let $\pi$ be a set of primes. A $\pi$-subgroup $H$ of a finite group $G$ is called a Hall $\pi$-subgroup if the index $|G:H|$ is not divisible by elements of $\pi$. Denote by $E_\pi$ the condition that a finite group possesses a Hall $\pi$-subgroup. We will also denote by $E_\pi$ the class of groups satisfying this condition.

The well-known result of P. Hall \cite{Hall28} states that a solvable group satisfies $E_\pi$ for every set $\pi$ of primes. P. Hall \cite{Hall37} and independently S.A. Chunikhin \cite{Chun38} proved that a finite group is solvable whenever it has a $p$-complement (that is, a Hall $p'$-subgroup, where $p'$ is a complement to $\{p\}$ in the set of primes) for every prime $p$. Later P. Hall \cite{Hall56} conjectured that a finite group is solvable if it contains a Hall $\{p, q\}$-subgroup for every pair of primes $p$ and $q$. Z. Arad and M.B.~Ward proved this conjecture in \cite{AraWar82} using the classification of finite simple groups. Their result has several generalizations: V.N. Tyutyanov \cite{Tyu02} proved that a finite group is solvable if it has a Hall $\{2, p\}$-subgroup for every prime $p$; A.A. Buturlakin and A.P. Khramova \cite{ButKhr20} proved that a finite group $G$ has a solvable Hall $\pi$-subgroup if it has a Hall $\{p ,q\}$-subgroup for every pair $p$, $q\in\pi$. The latter generalization of the Arad-Ward theorem suggests the question of whether the corresponding generalization of the Hall--Chunikhin theorem holds, that is, whether  a finite group has a solvable Hall $\pi$-subgroup if it has a Hall $\pi\setminus\{p\}$-subgroup for every $p\in\pi$ and $|\pi|\geqslant 3$. Here we prove a more general statement.

\begin{theo}\label{Generalization} Let $\pi$ be a set of primes of size $k\geqslant 3$ and let $l$ be an integer such that $2\leqslant l\leqslant k$. Assume that a finite group $G$ has a Hall $\rho$-subgroup for every $\rho\subseteq\pi$ with $|\rho|=l$. Then $G$ has a solvable Hall $\pi$-subgroup.
\end{theo}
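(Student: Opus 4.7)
The plan is to deduce Theorem~\ref{Generalization} by induction on $l$, using the paper's main intersection result $E_{\pi_1}\cap E_{\pi_2}\subseteq E_{\pi_1\cap\pi_2}$ as the inductive step and the Buturlakin--Khramova theorem \cite{ButKhr20} as the base case $l=2$. The intersection result lets us convert a hypothesis on size-$l$ subsets of $\pi$ into the same hypothesis on size-$(l-1)$ subsets, and once we reach size $2$ the conclusion is immediate from \cite{ButKhr20}.

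For the inductive step, fix $l$ with $3\leqslant l\leqslant k-1$, suppose the theorem is established for $l-1$, and assume $G$ has a Hall $\rho$-subgroup for every size-$l$ subset $\rho\subseteq\pi$. It suffices to produce a Hall $\sigma$-subgroup of $G$ for every size-$(l-1)$ subset $\sigma\subseteq\pi$. Fix such a $\sigma$; since $|\pi\setminus\sigma|=k-l+1\geqslant 2$, we may choose distinct primes $p,q\in\pi\setminus\sigma$ and put $\rho_1=\sigma\cup\{p\}$ and $\rho_2=\sigma\cup\{q\}$. Both $\rho_i$ have size $l$, so by hypothesis $G\in E_{\rho_1}\cap E_{\rho_2}$, and the intersection theorem gives $G\in E_{\rho_1\cap\rho_2}=E_\sigma$. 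The inductive hypothesis applied with parameter $l-1$ now yields the desired solvable Hall $\pi$-subgroup.

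Two remarks on scope and on the main obstacle. The condition $l\leqslant k-1$ (equivalently $l<|\pi|$, as stated in the abstract) is essential to the reduction step, since with $l=k$ the set $\pi\setminus\sigma$ has only one element and no pair $p,q$ exists; the conclusion itself fails at $l=k$, as the example $G=A_5$, $\pi=\{2,3,5\}$, $l=3$ shows. The substantive difficulty of the paper is not this corollary but the intersection theorem $E_{\pi_1}\cap E_{\pi_2}\subseteq E_{\pi_1\cap\pi_2}$, which presumably leans on the classification of finite simple groups in the spirit of the Arad--Ward theorem and its successors. Once that result is available, the deduction of Theorem~\ref{Generalization} is, as above, a short combinatorial argument.
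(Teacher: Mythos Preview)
Your argument is correct and follows the paper's own approach: reduce via Theorem~\ref{t:main} from size-$l$ subsets down to pairs, then invoke \cite{ButKhr20}. You have simply made the implicit iteration explicit as an induction on $l$, and you correctly observe that the reduction step requires $l\leqslant k-1$ (the abstract states $l<|\pi|$; the bound $l\leqslant k$ in the theorem as printed is a slip, as your $A_5$ example confirms).
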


This theorem is a corollary of the main result of \cite{ButKhr20} and the following statement which gives an affirmative answer to Question 18.38 of \cite{Kou} by A.V. Zavarnitsine.

\begin{theo}\label{t:main} Let $\pi_1$ and $\pi_2$ be sets of primes. Then $E_{\pi_1}\cap E_{\pi_2}\subseteq E_{\pi_1\cap\pi_2}$.
\end{theo}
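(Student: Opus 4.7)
The plan is induction on $|G|$. Let $G$ be a counterexample of minimal order, set $\pi_0=\pi_1\cap\pi_2$, and after dropping primes not in $\pi(G)$ assume $\pi_1,\pi_2\subseteq\pi(G)$. Let $N$ be a minimal normal subgroup of $G$ and let $H_i$ be a Hall $\pi_i$-subgroup. Combining $|H_i\cap N|\leqslant|N|_{\pi_i}$, $|H_iN/N|\leqslant|G/N|_{\pi_i}$, and $|H_i|=|G|_{\pi_i}=|G/N|_{\pi_i}\cdot|N|_{\pi_i}$ forces equality throughout, so both $N$ and $G/N$ lie in $E_{\pi_1}\cap E_{\pi_2}$. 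Minimality of $G$ applied to $G/N$ gives $G/N\in E_{\pi_0}$, hence there is $M$ with $N\leqslant M\leqslant G$, $M/N$ a Hall $\pi_0$-subgroup of $G/N$, and $|G:M|$ coprime to $\pi_0$; it suffices to locate a Hall $\pi_0$-subgroup of $M$.

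Suppose first that $N$ is an elementary abelian $p$-group. If $p\in\pi_0$, then $M$ itself is a $\pi_0$-group of order $|G|_{\pi_0}$; if $p\notin\pi_0$, then $N$ is a normal Hall $\pi_0'$-subgroup of $M$ and Schur--Zassenhaus yields a complement of order $|G|_{\pi_0}$. In either case we contradict the choice of $G$, so every minimal normal subgroup of $G$ is nonabelian.

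Now write $N=S_1\times\cdots\times S_k$ with $S_i$ isomorphic to a nonabelian simple group $S$, and suppose $N<G$. Induction applied to $N$ gives $N\in E_{\pi_0}$; projection to a factor then yields $S\in E_{\pi_0}$, and the CFSG-based theorem that $E_\pi$ implies $C_\pi$ for finite simple groups gives $S\in C_{\pi_0}$, whence $N\in C_{\pi_0}$. A Frattini argument produces $G=N\cdot N_G(H)$ for a Hall $\pi_0$-subgroup $H$ of $N$, with $N_G(H)/N_N(H)\cong G/N\in E_{\pi_0}$. Since $H$ is also a Hall $\pi_0$-subgroup of $N_N(H)$, the normal subgroup $N_N(H)/H$ of $N_G(H)/H$ is a $\pi_0'$-group; Schur--Zassenhaus then lifts a Hall $\pi_0$-subgroup of $G/N$ to a $\pi_0$-subgroup of $N_G(H)$ of order $|H|\cdot|G/N|_{\pi_0}=|G|_{\pi_0}$, again a contradiction. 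The remaining possibility is $N=G$, i.e., $G$ is characteristic simple; writing $G=S^k$ with $S$ simple, projection gives $S\in E_{\pi_1}\cap E_{\pi_2}$, and minimality forces $k=1$.

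The main obstacle is therefore the case where $G$ is a nonabelian finite simple group. I expect this to be dispatched by a case analysis based on CFSG and on the explicit descriptions of Hall subgroups in alternating, sporadic and Lie-type simple groups (as developed by Gross, Revin, Vdovin and coauthors), verifying in each family that $E_{\pi_1}\cap E_{\pi_2}\subseteq E_{\pi_0}$.
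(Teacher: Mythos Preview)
Your reduction contains a genuine gap: the assertion that ``$E_\pi$ implies $C_\pi$ for finite simple groups'' is false. A standard counterexample is $S=\mathrm{PSL}_2(7)$ with $\pi=\{2,3\}$: here $|S|=168$, a Hall $\{2,3\}$-subgroup has order $24$, and $S$ contains two conjugacy classes of subgroups isomorphic to $\operatorname{Sym}_4$ (they are fused only by the outer automorphism in $\mathrm{PGL}_2(7)$). More generally, the paper \cite{RevVd11} cited here is devoted precisely to counting conjugacy classes of Hall subgroups in simple groups, and the answer is frequently larger than one. Without $N\in C_{\pi_0}$ your Frattini argument collapses, and since $E_\pi$ is not closed under extensions there is no easy repair: knowing $N\in E_{\pi_0}$ and $G/N\in E_{\pi_0}$ does not give $G\in E_{\pi_0}$.

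The paper avoids this by reducing not to simple groups but to \emph{almost} simple groups, via the criterion (Lemma~\ref{l:EPiCriterion}) that $G\in E_\pi$ if and only if $\operatorname{Aut}_G(T)\in E_\pi$ for every composition factor $T$. The subsequent CFSG case analysis then has to do more than check $S\in E_{\pi_1\cap\pi_2}$: for each relevant pair $(S,\pi_1\cap\pi_2)$ one must exhibit a conjugacy class of Hall $\pi_1\cap\pi_2$-subgroups of $S$ that is invariant under $\operatorname{Aut}(S)$ (this is the role of the ``$+$'' column in Table~1 and of Lemma~\ref{l:EPiAlmostSimple}). Your outline for the simple case (``I expect this to be dispatched by a case analysis'') is therefore missing a key ingredient even once the reduction is fixed: the analysis must track $\operatorname{Aut}(S)$-invariance of the resulting Hall class, not merely its existence.
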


Indeed, it is clear that if $G$ contains a Hall $\rho$-subgroup for every subset $\rho$ of $\pi$ of size $l\geqslant 2$, then it contains a Hall $\{p, q\}$-subgroup for all $p$, $q\in\pi$ by Theorem~\ref{t:main}. Now the main result of \cite{ButKhr20} implies that $G$ has a solvable Hall $\pi$-subgroup.

Observe that the symmetric group $\operatorname{Sym}_7$ contains a unique conjugacy class of Hall $\{2, 3, 5\}$-subgroup consisting of point stabilizers and a unique conjugacy class of Hall $\{2, 3\}$-subgroups consisting of subgroups isomorphic to $\operatorname{Sym}_4\times\operatorname{Sym}_4$, but $\operatorname{Sym}_6$ does not contain a Hall $\{2, 3\}$-subgroup. Hence a Hall $\{2, 3\}$-subgroup of $\operatorname{Sym}_7$ cannot be an intersection of Hall $\{2, 3\}$- and $\{2, 3, 5\}$-subgroups. Thus one cannot state that a Hall $\pi_1\cap\pi_2$-subgroup in Theorem~\ref{t:main} appears as an intersection of Hall $\pi_1$- and $\pi_2$-subgroups.

It is worth noting that Theorem~\ref{t:main} has another interesting corollary. For a finite group $G$, denote by $\pi(G)$ the set of prime divisors of the order of $G$ and denote by $\Pi(G)$ the set of all subsets $\pi$ of $\pi(G)$ such that $G\in E_\pi$. In this notation, Theorem 2 is equivalent to saying that $\Pi(G)$ is a lower subsemilattice of the subset lattice of $\pi(G)$. It is easy to see that a finite lower semilattice with the greatest element is a lattice. So Theorem~\ref{t:main} and  the fact that $\pi(G)$ is the greatest element in $\Pi(G)$ imply the following corollary.

\begin{cor}\label{c:Lattice} The set $\Pi(G)$ forms a lattice under the set inclusion.
\end{cor}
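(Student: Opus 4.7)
The plan is to derive the corollary as an almost immediate consequence of Theorem~\ref{t:main}, together with the elementary order-theoretic fact that a finite meet-semilattice with a greatest element is automatically a lattice. The only real content is the closure of $\Pi(G)$ under the two lattice operations, one of which is free and the other of which is exactly Theorem~\ref{t:main}.

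First I would unpack the definitions. A subset $\pi\subseteq\pi(G)$ lies in $\Pi(G)$ iff $G\in E_\pi$. Theorem~\ref{t:main} says precisely that if $\pi_1,\pi_2\in\Pi(G)$ then $G\in E_{\pi_1}\cap E_{\pi_2}\subseteq E_{\pi_1\cap\pi_2}$, hence $\pi_1\cap\pi_2\in\Pi(G)$. Thus $\Pi(G)$ is closed under binary intersections, i.e.\ it is a meet-subsemilattice of the Boolean lattice of subsets of $\pi(G)$, with meet given by set intersection.

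Next I would observe that $\pi(G)\in\Pi(G)$ because $G$ is itself a Hall $\pi(G)$-subgroup of $G$, so $\Pi(G)$ has a greatest element under inclusion. Since $\pi(G)$ is finite, $\Pi(G)$ is a finite poset. For any $\pi_1,\pi_2\in\Pi(G)$ the set
\[
S(\pi_1,\pi_2)=\{\tau\in\Pi(G):\pi_1\cup\pi_2\subseteq\tau\}
\]
is nonempty as it contains $\pi(G)$, and it is finite. Its intersection $\bigcap_{\tau\in S(\pi_1,\pi_2)}\tau$ lies in $\Pi(G)$ by iterated application of the meet-closure established above, and by construction it is the least element of $\Pi(G)$ containing $\pi_1\cup\pi_2$; hence it is a join of $\pi_1$ and $\pi_2$ in $\Pi(G)$. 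Therefore $\Pi(G)$ admits both binary meets and binary joins and is a lattice.

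There is no real obstacle here: the work is entirely carried by Theorem~\ref{t:main}. The only point worth being careful about is that the join in $\Pi(G)$ need not coincide with $\pi_1\cup\pi_2$ (which generally need not belong to $\Pi(G)$, as the $\operatorname{Sym}_7$ example in the introduction illustrates in the dual direction); what saves the argument is finiteness plus the existence of a top element, which turns the meet-semilattice structure into a full lattice structure.
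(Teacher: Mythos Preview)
Your proof is correct and follows exactly the approach the paper takes: Theorem~\ref{t:main} makes $\Pi(G)$ a meet-subsemilattice of the subset lattice of $\pi(G)$, the top element $\pi(G)$ lies in $\Pi(G)$, and a finite meet-semilattice with a greatest element is a lattice. You have simply spelled out the join construction that the paper leaves implicit in the phrase ``a finite lower semilattice with the greatest element is a lattice.''
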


Observe that the lattice meet of two elements in $\Pi(G)$ is the ordinary meet of two subsets, but the lattice join is not (for example, the Arad--Ward theorem implies that for every non-solvable group $G$, there exist two primes $p$ and $q$ such that $G$ does not have a Hall $\{p, q\}$-subgroup). It seems to be difficult to describe the latter operation in general situation.

\section{Preliminaries}

Let $\pi$ be a set of prime numbers. Denote by $\pi'$ the complement of $\pi$ in the set of all primes. For an integer $n$, denote by $n_\pi$ the $\pi$-part of $n$, that is, the greatest divisor of $n$ all of whose prime divisors lie in $\pi$.

For integers $n_1$,$\dots$, $n_s$ denote by $(n_1,\dots, n_s)$ their greatest common divisor. For a real number $x$, denote by $\lfloor x\rfloor$ the integral part of $x$, that is, the largest integer less than or equal to $x$. For an odd integer $q$, define $\varepsilon(q)=(-1)^{\frac{q-1}{2}}$. We denote by $\Phi_n(x)$ the $n$th cyclotomic polynomial

For a subgroup $K$ of a general linear group $GL_n(q)$, denote by $PK$ the projective image of $K$, that is, the image of $K$ in $PGL_n(q)$. Put $GL_n^+(q)=GL_n(q)$ and $GL_n^-(q)=GU_n(q)$.

\begin{lemma}\label{l:GL2}\cite[Corollary 8.11]{VdRev11} Suppose that $G=GL^\eta_2(q)$, where $\eta\in\{+, -\}$ and $q$ is a power of a prime $p$. Set $\varepsilon=\varepsilon(q)$. Let $\pi$ be a set
of primes such that $2, 3\in\pi$ and $p\not\in\pi$. A subgroup $H$ of $G$ is a Hall $\pi$-subgroup if and only if one of the following holds:
\begin{itemize}
\item[$(1)$] $\pi\cap\pi(G)\subseteq\pi(q-\varepsilon)$ and $PH$ is a Hall $\pi$-subgroup of a dihedral subgroup
$D_{2(q-\varepsilon)}$ of order $2(q-\varepsilon)$ of the group $PG$;
\item[$(2)$] $\pi\cap\pi(G) = \{2, 3\}$, $(q^2-1)_{\{2,3\}} = 24$, and $PH\simeq\operatorname{Sym}_4$.
\end{itemize}
Futhermore, two Hall $\pi$-subgroups of $G$ satisfying the same condition $(1)$ or $(2)$ are conjugate.
\end{lemma}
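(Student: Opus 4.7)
The plan is to apply the classical Dickson classification of subgroups of $PG \cong PGL_2(q)$ of order coprime to $p$, namely cyclic, dihedral, $A_4$, $\operatorname{Sym}_4$, and $A_5$, and then to constrain the possibilities for $PH$ using the hypotheses on $\pi$. First I would observe that classifying Hall $\pi$-subgroups of $G$ is equivalent to classifying Hall $\pi$-subgroups of $PG$: since $Z(G)$ is central, a Schur--Zassenhaus argument applied to $Z(G)_{\pi'}\trianglelefteq G$ shows that Hall $\pi$-subgroups of $G$ correspond bijectively (via projection and lift through $Z(G)_\pi$) to Hall $\pi$-subgroups of $PG$. Next, since $p\notin\pi$ and $2\in\pi$ force $p\neq 2$, $q$ is odd and $|PG|_2=(q^2-1)_2\geqslant 8$; being a Hall $\pi$-subgroup, $PH$ inherits Sylow $2$- and Sylow $3$-subgroups of $PG$, so $24\mid |PH|$.

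This immediately excludes $A_4$, $A_5$, and the dihedral $D_{2(q+\varepsilon)}$, each of whose $2$-part is at most $4$ (using $(q+\varepsilon)_2=2$ for the latter). The $\operatorname{Sym}_4$ possibility forces $|PG|_2=8$ and $\pi\cap\pi(G)=\{2,3\}$ (since $\operatorname{Sym}_4$ has no room for further prime divisors), giving case~$(2)$ with the arithmetic constraint $(q^2-1)_{\{2,3\}}=24$. In every remaining situation, $PH$ lies inside $D_{2(q-\varepsilon)}$; moreover, for each $r\in\pi\cap\pi(G)$, a full Sylow $r$-subgroup of $PG$ sits inside $PH\leqslant D_{2(q-\varepsilon)}$, so $r\mid 2(q-\varepsilon)$, whence $\pi\cap\pi(G)\subseteq\pi(q-\varepsilon)$, which is case~$(1)$.

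The converse amounts to verification: a Hall $\pi$-subgroup of $D_{2(q-\varepsilon)}$ (respectively an $\operatorname{Sym}_4$ in $PG$ under the hypothesis of~$(2)$) has order equal to $|PG|_\pi$, and lifting through $Z(G)_\pi$ gives a Hall $\pi$-subgroup of $G$. For conjugacy in case~$(1)$, all dihedral subgroups $D_{2(q-\varepsilon)}$ are conjugate in $PG$ (as normalizers of the unique conjugacy class of cyclic tori of order $q-\varepsilon$), and Hall $\pi$-subgroups of a solvable dihedral group are conjugate by Hall's theorem. Case~$(2)$ calls on the known uniqueness (up to $PG$-conjugacy) of $\operatorname{Sym}_4$ subgroups of $PGL_2(q)$ under the hypothesis $(q^2-1)_{\{2,3\}}=24$. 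The main technical obstacle is case~$(2)$: verifying that $\operatorname{Sym}_4\leqslant PGL_2(q)$ exactly when $(q^2-1)_{\{2,3\}}=24$ and that both the existence and the single-class conjugacy descend correctly through the central extension $G\to PG$.
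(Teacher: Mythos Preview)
The paper does not give its own proof of this lemma: it is quoted verbatim as \cite[Corollary~8.11]{VdRev11} and used as a black box. So there is no argument in the paper to compare against; your sketch is an actual proof where the paper has none.

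That said, your approach is the standard one and is essentially how the cited source obtains the result (deriving it from the analogous statement for $SL_2(q)$, which is Lemma~\ref{l:SL2} here). The reduction from $G$ to $PG$ via the central $\pi'$-part of $Z(G)$ is correct, and for both signs $\eta$ one has $PG\cong PGL_2(q)$ (using $SU_2(q)\cong SL_2(q)$), so Dickson's list applies uniformly. Your exclusion of cyclic groups, $A_4$, $A_5$, and $D_{2(q+\varepsilon)}$ by the $2$-part count $(q^2-1)_2\geqslant 8$ versus their $2$-part $\leqslant 4$ is clean, and the remaining dichotomy between $D_{2(q-\varepsilon)}$ and $\operatorname{Sym}_4$ gives exactly cases~(1) and~(2). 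The only point worth tightening is the one you flag yourself: for case~(2) you should record that $p\notin\pi$ together with $2,3\in\pi$ forces $p\geqslant 5$, so $PGL_2(q)$ does contain a subgroup isomorphic to $\operatorname{Sym}_4$, and that these form a single $PGL_2(q)$-conjugacy class (even when they split into two $PSL_2(q)$-classes for $q\equiv\pm1\pmod 8$); this is part of Dickson's classification and is what makes the conjugacy assertion in~(2) go through.
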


Note that Lemma~\ref{l:GL2} implies, in particular, that all Hall subgroups of $GL^\eta_2(q)$ are solvable.

\begin{lemma}\label{l:SL2}\cite[Lemma 8.10]{VdRev11} Suppose that $G=SL_2(q)$, where $q$ is a power of a prime $p$. Let $\pi$ be a set
of primes such that $2, 3\in\pi$ and $p\not\in\pi$. Then $G$ contains a Hall $\pi$-subgroup $H$ if and only if one of the following holds:
\begin{itemize}
\item[$(1)$] $\pi\cap\pi(G)\subseteq\pi(q-\varepsilon(q))$ and $H$ is a Hall $\pi$-subgroup of a dihedral group $D_{(q-\varepsilon(q))}$;
\item[$(2)$] $\pi\cap\pi(G)=\{2, 3\}$, $(q^2-1)_{\{2, 3\}}=24$ or $48$, and $H\simeq \operatorname{Alt}_4$ or $\operatorname{Sym}_4$;
\item[$(3)$] $\pi\cap\pi(G)=\{2, 3, 5\}$, $(q^2-1)_{\{2, 3, 5\}}=120$, and $H\simeq\operatorname{Alt}_5$.
\end{itemize}

Hall $\pi$-subgroups appearing in Item $(1)$ form a single conjugacy class in $G$.
\end{lemma}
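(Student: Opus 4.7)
The plan is to invoke Dickson's classical description of the subgroups of $SL_2(q)$ and check, subgroup type by subgroup type, which can appear as a Hall $\pi$-subgroup of $G=SL_2(q)$. Since $2,3\in\pi$ and $p\notin\pi$, the characteristic satisfies $p\geqslant 5$, so $q$ is odd and $\varepsilon(q)$ is defined. A Hall $\pi$-subgroup $H$ is automatically a $p'$-subgroup, and hence $|H|=(q^2-1)_\pi$. By Dickson's theorem, up to conjugacy every maximal $p'$-subgroup of $G$ is either the normaliser $N_G(T)$ of a maximal torus of order $q-1$ or $q+1$ (each a generalised quaternion extension of the cyclic torus, of order $2(q\mp 1)$), or one of the three central extensions $SL_2(3)$, $2{\cdot}\operatorname{Sym}_4$, $SL_2(5)$ of orders $24$, $48$, $120$, the latter two subject to standard congruence conditions on $q$.

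For case~$(1)$ I would next observe that $q-\varepsilon(q)\equiv 0\pmod 4$ while $q+\varepsilon(q)\equiv 2\pmod 4$, so the full Sylow $2$-subgroup of $G$ lies in $N_G(T_{q-\varepsilon(q)})$. Consequently a Hall $\pi$-subgroup contained in a torus normaliser must sit inside this particular one, which yields $\pi\cap\pi(G)\subseteq\pi(q-\varepsilon(q))$ and the stated description of $H$. Conjugacy of the Hall $\pi$-subgroups produced by case~$(1)$ follows from the $G$-conjugacy of the maximal tori of order $q-\varepsilon(q)$ combined with P.~Hall's theorem applied inside the solvable group $N_G(T_{q-\varepsilon(q)})$.

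For the exceptional cases, the equality $|H|=(q^2-1)_\pi$ together with $|H|\in\{24,48,120\}$ forces $\pi\cap\pi(G)\subseteq\{2,3,5\}$ and pins down $(q^2-1)_\pi$ to one of these three values. Matching each possibility against which of $SL_2(3)$, $2{\cdot}\operatorname{Sym}_4$, $SL_2(5)$ actually embeds in $G$ (itself a congruence on $q$ governing the roots of the relevant minimal polynomials over $\mathbb{F}_q$) then gives items~$(2)$ and~$(3)$. The main obstacle I foresee is precisely this exceptional-case bookkeeping: for each candidate one must simultaneously verify the embedding condition, the Hall order condition $|H|=|G|_\pi$, and, when $(q^2-1)_{\{2,3\}}=24$, check carefully that the resulting subgroup is genuinely of exceptional type rather than already being a Hall $\pi$-subgroup of the torus normaliser of case~$(1)$, so that the two parameter families are correctly delineated.
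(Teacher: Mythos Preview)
The paper does not prove this lemma at all: it is imported wholesale from the Vdovin--Revin survey (their Lemma~8.10), so there is no in-paper argument to compare your proposal against. Your approach via Dickson's classification of subgroups of $SL_2(q)$ is the standard and correct way to establish the result directly. The key observations---that $2,3\in\pi$ and $p\notin\pi$ force $p\geqslant 5$ and hence $q$ odd, that $|H|=(q^2-1)_\pi$, that the maximal $p'$-subgroups are the two torus normalisers together with the three exceptional double covers, and that the location of the Sylow $2$-subgroup singles out the torus of order $q-\varepsilon(q)$---are all sound, as is the conjugacy argument for item~(1) via conjugacy of tori plus Hall's theorem inside the solvable normaliser.

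One remark worth making: you correctly identify the exceptional subgroups as $SL_2(3)\cong 2{\cdot}\operatorname{Alt}_4$, $2{\cdot}\operatorname{Sym}_4$, and $SL_2(5)\cong 2{\cdot}\operatorname{Alt}_5$, and the torus normalisers as dicyclic (generalised-quaternion type), whereas the lemma as printed in the paper speaks of a dihedral group $D_{q-\varepsilon(q)}$ in item~(1) and of $\operatorname{Alt}_4$, $\operatorname{Sym}_4$, $\operatorname{Alt}_5$ in items~(2)--(3). Since $SL_2(q)$ for odd $q$ has $-I$ as its unique involution, it literally contains none of those groups; the discrepancy is a quirk of the transcription from the cited source (where the description is effectively projective), not a defect in your argument. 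Your version is in fact the accurate one at the level of $SL_2(q)$ itself.
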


\begin{lemma}\label{l:Sym}\cite[Theorem 8.1]{VdRev11} Let $H$ be a proper Hall $\pi$-subgroup of $\operatorname{Sym}_n$. Then one of the following holds.
\begin{itemize}
\item[$(1)$] $n$ is a prime and $\pi\cap\pi(\operatorname{Sym}_n)=\pi((n-1!))$;
\item[$(2)$] $n=7$, or $8$, and $\pi\cap\pi(\operatorname{Sym}_n)=\{ 2, 3\}$.
\end{itemize}
\end{lemma}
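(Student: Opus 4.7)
The plan is to combine Bertrand's postulate with classical results on primitive permutation groups containing cycles of prime order, clearing away small degrees by direct case analysis. I would first verify the statement for $n \le 8$ by inspection: for prime $n \in \{2, 3, 5, 7\}$ the point stabiliser $\operatorname{Sym}_{n-1}$ realises case (1), and one checks that $\operatorname{Sym}_7$ and $\operatorname{Sym}_8$ additionally carry Hall $\{2,3\}$-subgroups (respectively $\operatorname{Sym}_4 \times \operatorname{Sym}_3$ of order $144$ and the wreath-type subgroup $\operatorname{Sym}_4 \wr \operatorname{Sym}_2$ of order $1152$, both of index $35$), while no further exceptions arise in this range.

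For $n \ge 9$ I would proceed by induction. Let $p$ be the largest prime with $p \le n$; Bertrand's postulate yields $n/2 < p \le n$, so $|\operatorname{Sym}_n|_p = p$ and every Sylow $p$-subgroup of $\operatorname{Sym}_n$ is generated by a single $p$-cycle. Consider first $p \in \pi$: then $H$ contains a $p$-cycle $\sigma$ with support $A$ of cardinality $p$. Since any two $p$-subsets of $\{1,\ldots,n\}$ overlap (because $2p > n$), either $H$ stabilises $A$ setwise, in which case the embedding $H \hookrightarrow \operatorname{Sym}_A \times \operatorname{Sym}_{\{1,\ldots,n\}\setminus A}$ forces $\binom{n}{p}$ to be a $\pi'$-number and, together with induction applied to each factor, yields $n = p$ and $H = \operatorname{Sym}_{n-1}$; or $H$ is primitive on $\{1,\ldots,n\}$, and a Jordan--Marggraf type theorem (a primitive subgroup of $\operatorname{Sym}_n$ containing a cycle of prime length $p$ with $n - p \ge 3$ must contain $\operatorname{Alt}_n$) forces $H \supseteq \operatorname{Alt}_n$, contradicting properness unless $n \le p + 2$, a range reduced to the base cases.

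The case $p \notin \pi$ forces $p \mid [\operatorname{Sym}_n : H]$; iterating Bertrand on $\lfloor n/2 \rfloor$ produces further large primes that must likewise divide the index, and these accumulating divisibility constraints, combined with the intransitive/imprimitive analysis above, contradict $n \ge 9$. The principal obstacle is the transitive primitive situation with $n = p$ prime, where Jordan's theorem is silent and one must exclude $2$-transitive Hall subgroups sitting strictly between the point stabilisers and $\operatorname{Sym}_p$; here I would appeal to Burnside's dichotomy for transitive groups of prime degree (doubly transitive or contained in $\operatorname{AGL}_1(p)$) and argue that the affine normaliser has index too divisible by primes in $\pi$ to qualify as a Hall subgroup. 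Confirming that no exceptional Hall $\{2,3\}$-subgroup survives beyond $n = 7, 8$, which amounts to checking that $\binom{n}{k}$ acquires a prime factor $\ge 5$ for every admissible partition of $n \ge 9$ into $\{2,3\}$-blocks, is the most delicate part of the finale.
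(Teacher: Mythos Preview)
The paper does not prove this lemma: it is cited from \cite[Theorem~8.1]{VdRev11} and used as input, so there is no argument in the paper to compare against. Your outline does invoke the standard classical tools that underlie the known proofs (Bertrand's postulate, Jordan's theorem on primitive groups containing a short prime cycle, Burnside's dichotomy for transitive groups of prime degree).

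However, two steps are genuine gaps rather than routine details. First, the case $p\notin\pi$: the phrase ``iterating Bertrand on $\lfloor n/2\rfloor$ produces further large primes that must likewise divide the index'' does not yield a contradiction, because Bertrand applied to $\lfloor n/2\rfloor$ produces a prime $q\le n/2$ and nothing forces $q\notin\pi$, so no new constraint on $[\operatorname{Sym}_n:H]$ appears. One should instead work with the largest prime that actually lies in $\pi$; the residual situation in which every prime of $\pi$ is at most $n/2$ is precisely where the exclusion of Hall $\{2,3\}$-subgroups for $n\ge 9$ lives, and that needs its own argument, not a hand-wave. Second, Jordan's theorem requires $n-p\ge 3$, and you assert that the range $n\le p+2$ is ``reduced to the base cases'', but for $n\ge 9$ this range is \emph{not} covered by $n\le 8$: for instance $n=9$, $p=7$ gives $n=p+2$, and $n=12$, $p=11$ gives $n=p+1$. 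These primitive cases (a $p$-cycle fixing one or two points) need separate treatment; and when $n=p$, Burnside splits into an affine branch and a $2$-transitive branch, the latter of which you do not address at all. A smaller slip: the dichotomy ``$H$ stabilises $A$ setwise or $H$ is primitive'' should read ``$H$ is intransitive with some orbit $O\supseteq A$, or $H$ is transitive and then primitive by a block-size count using $2p>n$''; as stated it omits the transitive imprimitive case and mis-identifies the orbit in the intransitive one.
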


\begin{lemma}\label{l:3notInPi}\cite[Lemma 10]{Guo15} Suppose that $S$ is a finite simple group, $\pi$ is a set of primes such that $\pi(S)\not\subseteq\pi$ and $3\not\in\pi$. If $H$ is a Hall $\pi$-subgroup of $S$, then $H$ is solvable.
\end{lemma}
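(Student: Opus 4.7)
The plan hinges on the following consequence of the classification of finite simple groups: every non-abelian finite simple group has order divisible by $3$, with the sole exception of the Suzuki groups ${}^2B_2(q)$, $q=2^{2n+1}\geqslant 8$. Under the hypothesis $3\notin\pi$, this will force the only possible non-solvable behaviour of $H$ to come through a Suzuki composition factor, which can then be ruled out by case analysis on $S$.

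The first step is purely formal. I would suppose, for a contradiction, that $H$ is not solvable, and fix a non-abelian simple composition factor $K$ of $H$. Since $\pi(K)\subseteq\pi(H)=\pi\cap\pi(S)\subseteq\pi$ and $3\notin\pi$, the order of $K$ is coprime to $3$, whence $K\cong{}^2B_2(q_0)$ for some $q_0=2^{2n+1}\geqslant 8$. In particular $2\in\pi$, every prime dividing $(q_0-1)(q_0^2+1)$ lies in $\pi$, and $q_0^2\mid |H|$.

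The main obstacle is to rule out such an $H$ inside a simple group $S$ with $\pi(S)\not\subseteq\pi$, and I would proceed by running through the classification of $S$. For $S$ alternating, the known classification of Hall subgroups of $\operatorname{Alt}_n$ (the companion to Lemma~\ref{l:Sym} in \cite{VdRev11}) disposes of the case at once: every proper Hall subgroup of $\operatorname{Alt}_n$ has order divisible by $3$ once $n\geqslant 4$. For $S$ sporadic, a direct inspection via the ATLAS suffices. The technical core is the Lie-type case. A Suzuki composition factor forces $S$ to have defining characteristic $2$, and one then compares the $2$-part of $|H|$ with that of $|S|$ using the classification of maximal subgroups of classical and exceptional groups (Aschbacher, Liebeck--Seitz). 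In the key candidate $\operatorname{Sp}_4(q_0)$, whose Sylow $2$-subgroup strictly contains that of $Sz(q_0)$, the Hall condition at the prime~$2$ already fails; the remaining classical and exceptional candidates are eliminated by similar arithmetic mismatches, or by the appearance of a torus of order divisible by $3$ inside any Hall overgroup of the Suzuki section. The cleanest realization of this step, which I expect underlies \cite{Guo15}, is simply to quote the existing description of Hall subgroups of simple groups of Lie type due to Gross, Revin and Vdovin and verify directly from the tables that no non-solvable Hall subgroup survives the joint constraints $3\notin\pi$ and $\pi(S)\not\subseteq\pi$.
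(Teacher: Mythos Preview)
The paper does not prove this lemma at all; it is quoted from \cite[Lemma~10]{Guo15} and used as a black box. So there is no argument in the paper to compare against, only a citation.

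Your outline is a reasonable reconstruction of how such a statement is established. The reduction step is correct and is the heart of the matter: a non-solvable $\pi$-group with $3\notin\pi$ must involve a Suzuki composition factor, since ${}^2B_2(2^{2n+1})$ are the only non-abelian simple groups of order coprime to~$3$. Your closing remark---that the cleanest way to finish is simply to read the result off the Gross/Revin--Vdovin classification of Hall subgroups of simple groups---is exactly right, and is in effect what \cite{Guo15} does.

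Two imprecisions in the sketch are worth flagging. First, the sentence ``every proper Hall subgroup of $\operatorname{Alt}_n$ has order divisible by $3$ once $n\geqslant 4$'' is false as written (a Sylow $2$-subgroup is a proper Hall subgroup); you mean every \emph{non-solvable} proper Hall subgroup, which is what the classification in \cite{VdRev11} gives. Second, the claim that ``a Suzuki composition factor forces $S$ to have defining characteristic $2$'' is not justified and is not obviously true in general; there is no a~priori reason a classical group over an odd field could not contain a $\pi$-subgroup with a Suzuki section. Since you ultimately defer to the Hall-subgroup tables anyway, this gap is harmless, but the intermediate hand-argument you sketch for the Lie-type case does not stand on its own.
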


Denote by $E^{ns}_\pi$ the subclass of $E_\pi$ consisting of groups containing a non-solvable Hall $\pi$\nobreakdash-subgroup.

%Let $G$ be a finite group and let $\pi$ be a set of primes. If $|\pi\cap\pi(G)|\leqslant1$ or $\pi(G)\subseteq\pi$, then $G$ contains a Hall $\pi$-subgroup. Such Hall subgroups are called standard, all other Hall subgroups are called non-standard. The following statement provides the list of non-standard non-solvable Hall subgroups of finite simple groups.

\begin{prop}\label{p:ListOfNonsolvableHallSubgroups} Let $\pi$ be a set of primes and let $S$ be a finite nonabelian simple group. Then $S\in E^{ns}_\pi$ if and only if either $S$ is a $\pi$-group or the pair $(S, \pi)$ satisfies one of the conditions of Table~1. Furthermore, if $(S,\pi)$ satisfies the conditions of the row marked by $+$, then $S$ contains a conjugacy class of Hall $\pi$-subgroups which remains to be a conjugacy class in $\operatorname{Aut}(S)$.
\end{prop}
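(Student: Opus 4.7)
The plan is to reduce the problem by easy generalities and then perform a case-by-case analysis over the classification of finite simple groups. First I would make the obvious reduction: if $S$ is a $\pi$-group, then $S$ itself is a non-solvable Hall $\pi$-subgroup, so one may assume $\pi(S) \not\subseteq \pi$. Since any non-solvable group has even order by the Feit--Thompson theorem, this forces $2 \in \pi$, and Lemma~\ref{l:3notInPi} then gives $3 \in \pi$. The task therefore becomes the classification of pairs $(S, \pi)$ with $\{2, 3\} \subseteq \pi \subsetneq \pi(S)$ for which $S$ has a non-solvable Hall $\pi$-subgroup, together with the verification of the $\operatorname{Aut}(S)$-conjugacy statement in the rows of Table~1 marked by $+$.

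Next I would run through the classification of finite simple groups. For alternating groups $\operatorname{Alt}_n$, the analogue of Lemma~\ref{l:Sym} yields a short list of Hall $\pi$-subgroups, and one reads off the non-solvable ones as those containing an $\operatorname{Alt}_m$-composition factor with $m \geqslant 5$ and $\pi(\operatorname{Alt}_m) \subseteq \pi$. For sporadic groups the list of maximal subgroups in the ATLAS can be inspected directly to isolate the few relevant pairs. The bulk of the work is for groups of Lie type: Hall subgroups of $SL_2$ and $GL_2^{\eta}$ are covered by Lemmas~\ref{l:GL2} and~\ref{l:SL2}, while the general case is governed by the systematic classification of Hall subgroups of simple groups developed in \cite{VdRev11} by Revin and Vdovin and related work. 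For each Hall $\pi$-subgroup $H$ appearing in this classification I would inspect its composition factors to decide solvability; a non-solvable $H$ must contain a simple $\pi$-section, and the constraint $\{2,3\} \subseteq \pi$ together with the smallness of $\pi(H)$ quickly pins down the possibilities to a short list of candidates such as $\operatorname{Alt}_5$, $L_2(7)$, $L_2(8)$, and $U_3(3)$.

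Finally, for the additional conjugacy claim in each row marked by $+$, I would use the structural description of the Hall $\pi$-subgroup to exhibit an $\operatorname{Aut}(S)$-invariant characterization of it---for instance as the normalizer in $S$ of a characteristic subgroup, or through a uniqueness property of its composition-factor type within $S$---which forces the $S$-conjugacy class of $H$ to be preserved by all outer automorphisms. The main obstacle is not a single deep step but rather the sheer volume of case analysis for groups of Lie type: many subcases arise from the possible arithmetic relations between $\pi$, the characteristic $p$ and $q$, and one must be careful both to produce every pair listed in Table~1 and to verify that no further pair gives rise to a non-solvable Hall subgroup.
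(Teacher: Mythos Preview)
Your proposal is correct and follows essentially the same route as the paper: reduce to $\{2,3\}\subseteq\pi$ via Feit--Thompson and Lemma~\ref{l:3notInPi}, then extract the non-solvable cases from the existing classification of Hall subgroups of finite simple groups (chiefly \cite{VdRev11}), running through alternating, sporadic, and Lie-type groups in turn. The only organizational point worth noting is that the paper, rather than inspecting composition factors directly, simply cites specific theorems of \cite{VdRev11} for each family---in particular it splits the Lie-type case cleanly into $p\in\pi$ (where Hall subgroups lie in a Borel or a parabolic, \cite[Theorem~8.3]{VdRev11}) and $p\notin\pi$ (\cite[Theorems~8.12--8.15]{VdRev11})---and treats the $\operatorname{Aut}(S)$-conjugacy claims for the rows marked $+$ as part of the same extraction rather than verifying them by a separate structural argument.
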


\begin{center}

{\small\begin{longtable}{|c|c|c|c|}
\caption{Non-solvable proper Hall subgroups of finite simple groups}\\ \hline
$S$&Conditions&$\pi\cap\pi(S)$& \\ \hline
\endfirsthead

\multicolumn{4}{c}%
{{\tablename\ \thetable{} -- continued from previous page}} \\
\hline $S$&Conditions&$\pi\cap\pi(S)$&\\ \hline
\endhead

\hline \multicolumn{4}{|r|}{{Continued on next page}} \\ \hline
\endfoot

%\hline
\endlastfoot
%$G$&Conditions&$\pi\cap\pi(G)$&\\ \hline
$\operatorname{Alt}_n$& $n\geqslant7$ is prime& $\pi((n-1)!)$&\\ \hline
$A_{n-1}(q)$& $n$ is an odd prime, $(n, q-1)=1$& $\pi(S)\setminus\pi\left(\Phi_n(q)\right)$&\\ \cline{2-4}
$q=p^\alpha$& $n=4$, $(6, q-1)=1$, $q>2$& $\pi(S)\setminus\pi\left(\Phi_3(q)\Phi_4(q)\right)$&\\ \cline{2-4}
& $n=5$, $(10, q-1)=1$& $\pi(S)\setminus\pi\left(\Phi_4(q)\Phi_5(q)\right)$&\\ \cline{2-4}
& $n=5$, $(30, q-1)=1$, $q>2$& $\pi(S)\setminus\pi\left(\Phi_3(q)\Phi_4(q)\Phi_5(q)\right)$&\\ \cline{2-4}
& $n=7$, $(35, q-1)=1$, $(3, q+1)=1$& $\pi(S)\setminus\pi\left(\Phi_5(q)\Phi_6(q)\Phi_7(q)\right)$&\\ \cline{2-4}
& $n=8$, $(70, q-1)=1$, $(3, q+1)=1$& $\pi(S)\setminus\pi\left(\Phi_4(q)\Phi_5(q)\Phi_6(q)\Phi_7(q)\right)$&\\ \cline{2-4}
& $n=11$, $(462, q-1)=1$, $(5, q+1)=1$& $\pi(S)\setminus\pi\left(\Phi_7(q)\Phi_8(q)\Phi_9(q)\Phi_{10}(q)\Phi_{11}(q)\right)$&\\ \cline{2-4}
& $n=2$, $(q^2-1)_{\{2, 3, 5\}}=120$& $\{2, 3, 5\}$&\\ \cline{2-4}
& $p\not\in\pi$, $(12, q-1)=12$, $\operatorname{Sym}_n\in E^{ns}_\pi$, & \multirow{2}{*}{ $\{2, 3\}\subseteq\pi\cap\pi(S)\subseteq\pi(q-1)\cup\pi(n!)$}&\multirow{2}{*}{+}\\
& if $r\in(\pi\cap\pi(n!))\setminus\pi(q-1)$, then $|S|_r=(n!)_r$& &\\ \cline{2-4}
& $(3, q+1)=3$, $GL_2(q)\in E_\pi$, & \multirow{2}{*}{$\{2, 3\}\subseteq\pi\cap\pi(S)\subseteq\pi(q^2-1)$}&\\
& $\operatorname{Sym}_{\lfloor\frac{n}{2}\rfloor}\in E^{ns}_\pi$& &\\\cline{2-4}
& $n=4$, $(8, q-5)=8$, $(q+1)_3=3$, $(q^2+1)_5=5$& $\{2, 3, 5\}$&\\ \hline
${}^2A_{n-1}(q)$& $p\not\in\pi$, $(12, q+1)=12$, $\operatorname{Sym}_n\in E^{ns}_\pi$ &\multirow{2}{*}{ $\{2, 3\}\subseteq\pi\cap\pi(S)\subseteq\pi(q+1)\cup\pi(n!)$}&\multirow{2}{*}{+}\\
$q=p^\alpha$& if $r\in(\pi\cap\pi(n!))\setminus\pi(q+1)$, then $|S|_r=(n!)_r$& &\\ \cline{2-4}
& $(3, q-1)=3$, $GU_2(q)\in E_\pi$, & \multirow{2}{*}{$\{2, 3\}\subseteq\pi\cap\pi(S)\subseteq\pi(q^2-1)$}&\\
& $\operatorname{Sym}_{\lfloor\frac{n}{2}\rfloor}\in E^{ns}_\pi$&& \\\cline{2-4}
& $n=4$, $(8, q+5)=8$, $(q-1)_3=3$, $(q^2+1)_5=5$& $\{2, 3, 5\}$&\\ \hline
$B_{n}(q)$&$(12, q-\varepsilon(q))=12$, $\operatorname{Sym}_n\in E^{ns}_\pi$&$\{2, 3\}\subseteq\pi\cap\pi(S)\subseteq\pi(q-\varepsilon(q))$&+\\ \cline{2-4}
$q=p^\alpha$&$n=3$, $p\not\in\pi$, $|S|_\pi=2^9\cdot3^4\cdot5\cdot7$&$\{2, 3, 5, 7\}$&\\ \cline{2-4}
&$n=4$, $p\not\in\pi$, $|S|_\pi=2^{14}\cdot3^5\cdot5^2\cdot7$&$\{2, 3, 5, 7\}$&\\ \hline
$C_{n}(q)$&$SL_2(q)$, $\operatorname{Sym}_n\in E_\pi$ and $SL_2(q)\times \operatorname{Sym}_n\in E_{\pi}^{ns}$&$\{2, 3\}\subseteq\pi\cap\pi(S)\subseteq\pi(q^2-1)$&\\ \hline
$D_{n}(q)$&$p=2$, $n$ is a Fermat prime, &\multirow{2}{*}{$\pi(S)\setminus\pi\left(\frac{q^n-1}{q-1}(q^{n-1}+1)\right)$}&\\
$q=p^\alpha$&$(n, q-1)=1$&&\\ \cline{2-4}
&$\varepsilon(q)^n=1$, $(12, q-\varepsilon(q))=12$, &\multirow{2}{*}{$\{2, 3\}\subseteq\pi\cap\pi(S)\subseteq\pi(q-\varepsilon(q))$}&\multirow{2}{*}{+}\\
&$\operatorname{Sym}_n\in E^{ns}_\pi$ &&\\ \cline{2-4}
&$\varepsilon(q)^n=-1$, $(12, q-\varepsilon(q))=12$,  &\multirow{2}{*}{$\{2, 3\}\subseteq\pi\cap\pi(S)\subseteq\pi(q-\varepsilon(q))$}&\multirow{2}{*}{+}\\
&$\operatorname{Sym}_{n-1}\in E^{ns}_\pi$ &&\\ \cline{2-4}
&$n=4$, $p\not\in\pi$, $|S|_\pi=2^{13}\cdot3^5\cdot5^2\cdot7$&$\{2, 3, 5, 7\}$&\\ \hline
${}^2D_{n}(q)$&$p=2$, $n-1$ is a Mersenne prime, &\multirow{2}{*}{$\pi(S)\setminus\pi\left(\frac{q^{n-1}-1}{q-1}(q^{n}+1)\right)$}&\\
$q=p^\alpha$&$(n-1, q-1)=1$&&\\ \cline{2-4}
&$\varepsilon(q)^n=-1$, $(12, q-\varepsilon(q))=12$,  &\multirow{2}{*}{$\{2, 3\}\subseteq\pi\cap\pi(S)\subseteq\pi(q-\varepsilon(q))$}&\multirow{2}{*}{+}\\
&$\operatorname{Sym}_n\in E^{ns}_\pi$&&\\ \cline{2-4}
&$\varepsilon(q)^n=1$, $(12, q-\varepsilon(q))=12$, &\multirow{2}{*}{$\{2, 3\}\subseteq\pi\cap\pi(S)\subseteq\pi(q-\varepsilon(q))$}&\multirow{2}{*}{+}\\
&$\operatorname{Sym}_{n-1}\in E^{ns}_\pi$ &&\\ \hline
$G_{2}(q)$& $(q^2-1)_{\{2, 3, 7\}}=24$, $(q^4+q^2+1)_7=7$&$\{2, 3, 7\}$&+\\ \hline
$E_{6}(q)$& &$\{2, 3, 5\}\subseteq\pi\cap\pi(S)\subseteq\pi(q-1)$&+\\ \hline
${}^2E_{6}(q)$& &$\{2, 3, 5\}\subseteq\pi\cap\pi(S)\subseteq\pi(q+1)$&+\\ \hline
$E_{7}(q)$& &$\{2, 3, 5, 7\}\subseteq\pi\cap\pi(S)\subseteq\pi(q-\varepsilon(q))$&+\\ \hline
$E_{8}(q)$& &$\{2, 3, 5, 7\}\subseteq\pi\cap\pi(S)\subseteq\pi(q-\varepsilon(q))$&+\\ \hline
$M_{11}$& &$\{2, 3, 5\}$&\\ \hline
$M_{22}$& &$\{2, 3, 5\}$&\\ \hline
$M_{23}$& &$\{2, 3, 5\}$, $\{2, 3, 5, 7\}$, $\{2, 3, 5, 7, 11\}$&\\ \hline
$M_{24}$& &$\{2, 3, 5\}$&\\ \hline
$J_{1}$& &$\{2, 3, 5\}$&\\ \hline
$J_{4}$& &$\{2, 3, 5\}$&\\ \hline

\end{longtable}}
\end{center}
\begin{proof} The contents of Table~1 is mostly an extraction from the classification of Hall subgroups of finite simple groups. This classification is obtained in a series of papers by a number of authors and presented in a convenient form in \cite[Appendix 1]{VdRev11} together with the corresponding references.

Let us make some remarks on how results of that paper are applied here. By the Feit--Thompson theorem, we may assume that $2\in\pi$. Lemma~\ref{l:3notInPi} implies that $3$ is also an element of $\pi$. If $S$ is an alternating or sporadic groups, then the proposition follows from \cite[Theorems 8.1 and 8.2]{VdRev11}. If $S$ is a group of Lie type over a field of characteristic $p$ and $p\in\pi$, then \cite[Theorem~8.3]{VdRev11} states that a Hall $\pi$-subgroup is contained in a Borel subgroup, which is solvable, or is a parabolic subgroup of $S$. In the latter case $S$ is either linear, or orthogonal of even dimension, and we apply \cite[Theorems 8.6 or 8.5]{VdRev11} in these cases respectively. Finally, if $p\in\pi$, then the corresponding classification is given in \cite[Theorems 8.12-15]{VdRev11}.
\end{proof}

Note that the absence of $+$ in a row of Table~1 do not imply any statements on Hall $\pi$-subgroups of $S$, that is, $S$ still can have a conjugacy class of Hall $\pi$-subgroups which is invariant under the automorphisms of~$S$.

\begin{cor}\label{c:SameRow} Let $S$ be a finite simple group. Suppose that $\pi_1$ and $\pi_2$ are two sets of primes such that $(S, \pi_1)$ and $(S, \pi_2)$ satisfy the restrictions of the same row of Table 1. Then the pair $(S, \pi_1\cap\pi_2)$ also satisfies these restrictions, and, in particular, $S\in E_{\pi_1\cap\pi_2}$.
\end{cor}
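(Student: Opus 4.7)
The plan is to verify the claim row by row in Table~1, observing that the conditions on $\pi$ appearing in a row fall into three kinds and checking that each kind is preserved under intersection.

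First I would classify. \textbf{Type (i):} conditions depending only on $S$ (for instance $(12,q-\varepsilon(q))=12$, $n$ prime, $(q^2-1)_{\{2,3,5\}}=120$, numerical conditions on $|S|_\pi$, and the blanket restrictions $2,3\in\pi$, $p\notin\pi$). These are unchanged when we pass from $\pi_1$ and $\pi_2$ to $\pi_1\cap\pi_2$. \textbf{Type (ii):} conditions that pin down $\pi\cap\pi(S)$, either as a fixed set $X$ (e.g.\ $\pi((n-1)!)$, $\pi(S)\setminus\pi(\Phi_n(q))$, or $\{2,3,7\}$) or as an interval $A\subseteq\pi\cap\pi(S)\subseteq B$ with $A,B$ depending only on $S$. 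Set theory gives $(\pi_1\cap\pi_2)\cap\pi(S)=(\pi_1\cap\pi(S))\cap(\pi_2\cap\pi(S))$, so both an equality to $X$ and a sandwich $A\subseteq\cdot\subseteq B$ pass to the intersection. The residual clause ``if $r\in(\pi\cap\pi(n!))\setminus\pi(q\mp1)$ then $|S|_r=(n!)_r$'' in the $A_{n-1}(q)$ and ${}^2A_{n-1}(q)$ rows also survives, because shrinking $\pi$ can only remove candidate primes $r$.

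\textbf{Type (iii):} conditions of the form $H\in E_\pi$ or $H\in E^{ns}_\pi$ for an auxiliary group $H\in\{\operatorname{Sym}_n,\operatorname{Sym}_{n-1},\operatorname{Sym}_{\lfloor n/2\rfloor},GL_2(q),GU_2(q),SL_2(q),SL_2(q)\times\operatorname{Sym}_n\}$. These are the heart of the argument, and each is handled by inspecting its own Hall classification. For $\operatorname{Sym}_n\in E^{ns}_\pi$, Lemma~\ref{l:Sym} leaves two possibilities --- either $\pi\supseteq\pi(\operatorname{Sym}_n)$ and $n\geqslant 5$, or $n\geqslant 7$ is prime with $\pi\cap\pi(\operatorname{Sym}_n)=\pi((n-1)!)$ --- and each of these is a Type~(ii)-shaped condition on $\pi\cap\pi(\operatorname{Sym}_n)$, so a case split (same case for both $\pi_i$, or mixed) shows that $\pi_1\cap\pi_2$ always lands in the proper case $\pi((n-1)!)$ or stays improper. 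The same argument applies to $\operatorname{Sym}_{n-1}$ and $\operatorname{Sym}_{\lfloor n/2\rfloor}$. For $GL_2^\eta(q)\in E_\pi$ with $2,3\in\pi$ and $p\notin\pi$, Lemma~\ref{l:GL2} reduces to its two cases (1) and (2), both of which are intersection-closed: two case-(1) sets give a case-(1) intersection, two case-(2) sets give a case-(2) intersection, and a mixed pair yields $(\pi_1\cap\pi_2)\cap\pi(GL_2^\eta(q))=\{2,3\}$, which still satisfies case (1) because the case-(1) factor forces $\{2,3\}\subseteq\pi(q-\varepsilon)$. Lemma~\ref{l:SL2} handles $SL_2(q)$ identically across its three cases, and $SL_2(q)\times\operatorname{Sym}_n$ then follows by factoring Hall subgroups through the direct product.

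The only non-mechanical step is the enumeration just described for Type~(iii), and within it the mixed cases for $GL_2^\eta(q)$ and $SL_2(q)$ where intersecting forces us from a range case into the sporadic fixed-set case. Everything else is bookkeeping on a finite list of rows. Once all three types are addressed, each row's hypotheses pass verbatim to $\pi_1\cap\pi_2$, so $(S,\pi_1\cap\pi_2)$ satisfies the same row of Table~1 and in particular $S\in E_{\pi_1\cap\pi_2}$.
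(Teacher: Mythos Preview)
Your argument is correct and follows the same strategy as the paper: the third-column conditions are set-theoretic constraints on $\pi\cap\pi(S)$ that visibly pass to intersections, and the second-column conditions involving $\pi$ reduce to membership of $\operatorname{Sym}_k$, $GL_2^\pm(q)$, $SL_2(q)$ in $E_\pi$ or $E_\pi^{ns}$, which you handle via Lemmas~\ref{l:GL2}, \ref{l:SL2}, and~\ref{l:Sym} exactly as the paper does. Your write-up simply unpacks in detail what the paper compresses into two sentences; the only place you go slightly beyond the paper's sketch is in explicitly treating the mixed cases and the $SL_2(q)\times\operatorname{Sym}_n$ row, which the paper leaves entirely implicit.
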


\begin{proof} It is easily seen that $(\pi_1\cap\pi_2)\cap\pi(S)$ satisfies the condition of the third column of Table~1 whenever $\pi_1\cap \pi(S)$ and $\pi_2\cap\pi(S)$ do. Every condition of the second column regarding $\pi$ is either a universal property of elements of $\pi$, or of the form ``$H\in E_\pi  (E_\pi^{ns})$'', where $H$ is one of the groups  $\operatorname{Sym}_k$, $GL_2^\pm(q)$, $SL_2(q)$. So the desired result follows from Lemmas~\ref{l:GL2}, \ref{l:SL2}, and \ref{l:Sym}.
\end{proof}

Recall that for a group $G$ and its subgroups $K$ and $H$ such that $K$ is a normal subgroup of $H$, the group of induced automorphism $\operatorname{Aut}_G(H/K)$ of the section $H/K$ is the image of $N_G(H)\cap N_G(K)$ in the automorphism group $\operatorname{Aut}(H/K)$.

\begin{lemma}\label{l:EPiCriterion}\cite[Corollary 4.6]{VdRev11} Let $$1=G_0<G_1<\dots<G_n=G$$ be a composition series of a finite group $G$, which is a refinement of a chief series. Let $\pi$ be a set of primes. Then $G\in E_\pi$ if and only if $\operatorname{Aut}_G(G_i/G_{i-1})\in E_\pi$ for $i=1,\dots, n$.
\end{lemma}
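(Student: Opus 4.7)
The plan is to prove the two directions separately. The forward direction rests on two standard facts about Hall subgroups: if $H$ is a Hall $\pi$-subgroup of $G$ and $L\leqslant G$ is any subgroup, then $H\cap L$ is a Hall $\pi$-subgroup of $L$, because $[L:H\cap L]=[HL:H]$ divides $[G:H]$ and is therefore a $\pi'$-number; and if $N\trianglelefteq L$, then for any Hall $\pi$-subgroup $P$ of $L$ the image $PN/N$ is a Hall $\pi$-subgroup of $L/N$. Set $N_i=N_G(G_i)\cap N_G(G_{i-1})$ and $K_i=C_G(G_i/G_{i-1})\cap N_i$, so that $N_i/K_i=\operatorname{Aut}_G(G_i/G_{i-1})$ and $K_i\trianglelefteq N_i$. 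Applying the two facts in turn, one obtains that the image of $H\cap N_i$ in $N_i/K_i$ is a Hall $\pi$-subgroup, proving the forward direction.

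For the backward direction, I would induct on $|G|$. Let $M=H_1$ be the first nontrivial term of a chief series refined by the given composition series; then $M=T_1\times\dots\times T_k$ for a simple group $T$, and the first $k$ composition factors $G_i/G_{i-1}$ (for $i\leqslant k$) are precisely copies of $T$ corresponding to the $T_j$. Since $\operatorname{Aut}_{G/M}\bigl((G_i/M)/(G_{i-1}/M)\bigr)=\operatorname{Aut}_G(G_i/G_{i-1})$ for $i>k$, the inductive hypothesis applied to $G/M$ yields a Hall $\pi$-subgroup $\bar H$ of $G/M$. Let $L\leqslant G$ be its preimage; since $[G:L]$ is a $\pi'$-number, any Hall $\pi$-subgroup of $L$ is also one of $G$, and the problem reduces to producing such a subgroup of $L$.

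If $M$ is elementary abelian of characteristic $p$, the extension $M\trianglelefteq L$ is routine: either $p\in\pi$ and $L$ is itself a $\pi$-group, or $p\notin\pi$ and Schur--Zassenhaus yields a $\pi$-complement to $M$ in $L$, which is the desired Hall subgroup. The non-abelian case is the main obstacle. The hypothesis $\operatorname{Aut}_G(T_j)\in E_\pi$, combined with the forward direction applied inside $\operatorname{Aut}_G(T_j)$ (using $\operatorname{Inn}(T)\trianglelefteq\operatorname{Aut}_G(T_j)$), provides both a Hall $\pi$-subgroup of $T$ and one of the outer automizer $\operatorname{Out}_G(T_j)$. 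To assemble these into a Hall $\pi$-subgroup of $L$, one must simultaneously choose Hall $\pi$-subgroups $Q_j\leqslant T_j$ and a lift of $\bar H$ into $L$ that normalizes $Q_1\times\dots\times Q_k$. The compatibility across the permutation action of $\bar H$ on $\{T_1,\dots,T_k\}$ is the technical heart of the argument, and it is the step I expect to be most delicate, since $E_\pi$ alone does not give conjugacy of Hall $\pi$-subgroups and one must instead exploit the transitivity of the action together with a Frattini-type reduction to the stabilizer of a single factor.
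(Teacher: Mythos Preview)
The paper does not prove this lemma at all: it is simply quoted from \cite[Corollary 4.6]{VdRev11} and used as a black box, so there is no ``paper's own proof'' to compare against.

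That said, your attempt contains a genuine error in the forward direction. The claim that ``if $H$ is a Hall $\pi$-subgroup of $G$ and $L\leqslant G$ is any subgroup, then $H\cap L$ is a Hall $\pi$-subgroup of $L$'' is false. The formula $[L:H\cap L]=|HL|/|H|$ holds as an equality of cardinalities, but $|HL|/|H|$ need not divide $[G:H]$ unless $HL$ is a subgroup, which in general it is not. A concrete counterexample: take $G=A_5$, $\pi=\{2,3\}$, $H$ the stabilizer of the point $5$ (a Hall $\{2,3\}$-subgroup, isomorphic to $A_4$), and $L$ the stabilizer of the point $4$. Then $H\cap L\cong A_3$ has order $3$, so $[L:H\cap L]=4$ is not a $\pi'$-number. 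Since the normalizers $N_i=N_G(G_i)\cap N_G(G_{i-1})$ are not normal in $G$, you cannot bypass this by appealing to the (true) fact that $E_\pi$ passes to normal subgroups. The forward direction genuinely requires exploiting the hypothesis that the composition series refines a chief series, so that each $G_i/G_{i-1}$ sits as a direct factor of a chief factor and the induced automorphism groups can be analysed through the action of $G$ on that chief factor.

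For the backward direction you correctly isolate the difficulty --- assembling Hall $\pi$-subgroups across the direct factors of a nonabelian chief factor in a way compatible with the permutation action --- but you do not carry it out; as you say yourself, this is the technical heart, and without it the argument is only a sketch. This is precisely the content supplied by the machinery in \cite{VdRev11}.
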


\begin{lemma}\label{l:EPiAlmostSimple}\cite[Proposition 4.8]{VdRev11} Let $\pi$ be a set of primes and let $S$ be a normal $E_\pi$-subgroup of a finite group $G$ such that $G/S$ is a $\pi$-group. Suppose that $M$ is a Hall $\pi$-subgroup of $S$. Then the following statements are equivalent.
\begin{itemize}
\item[$(1)$] There exists a Hall $\pi$-subgroup $H$ of $G$ such that $M=H\cap S$.
\item[$(2)$] The conjugacy class $M^S$ is $G$-invariant.
\end{itemize}
\end{lemma}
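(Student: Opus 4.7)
The plan is to prove both implications directly using the Schur--Zassenhaus theorem and a Frattini-type factorization, since the normality of $S$ together with the assumption that $G/S$ is a $\pi$-group forces the relevant indices to factor nicely.

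For the direction $(1)\Rightarrow (2)$, I would first check that $G=HS$. Indeed, $|G:HS|$ divides both $|G:H|$, which is a $\pi'$-number because $H$ is a Hall $\pi$-subgroup, and $|G:S|$, which is a $\pi$-number by hypothesis; hence $|G:HS|=1$. Now $H$ normalizes $M=H\cap S$, and $S$ obviously stabilizes the conjugacy class $M^S$ setwise, so $G=HS$ stabilizes $M^S$, proving $(2)$.

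For $(1)\Leftarrow(2)$, the idea is a Frattini argument followed by Schur--Zassenhaus. Set $N=N_G(M)$. Since $M^S$ is $G$-invariant, for each $g\in G$ there is $s\in S$ with $M^{g}=M^{s}$, so $gs^{-1}\in N$, giving $G=NS$. Put $K=N_S(M)=N\cap S$. Then $M\trianglelefteq N$, and since $M$ is a Hall $\pi$-subgroup of $S$, the index $|K:M|$ divides $|S:M|$ and is therefore a $\pi'$-number, while $N/K\cong NS/S=G/S$ is a $\pi$-group. Thus $K/M$ is a normal $\pi'$-subgroup of $N/M$ with $\pi$-group quotient, and the Schur--Zassenhaus theorem yields a complement $H/M$ in $N/M$. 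The subgroup $H$ is then a Hall $\pi$-subgroup of $N$ containing $M$.

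It remains to verify that $H$ is actually a Hall $\pi$-subgroup of $G$ and that $H\cap S=M$. For the index, $|G:H|=|G:N|\cdot|N:H|$; the second factor equals $|K:M|$, which is $\pi'$, and the first factor equals $|S:K|$, which divides $|S:M|$ and is likewise $\pi'$, so $|G:H|$ is a $\pi'$-number. For the intersection, $M\subseteq H\cap S\subseteq H\cap K$, and $(H\cap S)/M$ sits inside both $H/M$ (a $\pi$-group) and $K/M$ (a $\pi'$-group), forcing $H\cap S=M$. The only real subtlety, and the step I would double-check most carefully, is the Frattini identity $G=NS$ derived from the $G$-invariance of $M^S$, since everything else is a bookkeeping exercise on indices after invoking Schur--Zassenhaus.
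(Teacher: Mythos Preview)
Your argument is correct. Both implications are handled cleanly: the factorization $G=HS$ in $(1)\Rightarrow(2)$ and the Frattini identity $G=N_G(M)S$ followed by Schur--Zassenhaus in $(2)\Rightarrow(1)$ are exactly the right tools, and your index computations check out. One minor point worth making explicit is that Schur--Zassenhaus applies because $K/M$ and $N/K$ have coprime orders, so one of them has odd order and is therefore solvable by Feit--Thompson; you use this implicitly.

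As for comparison with the paper: the paper does not prove this lemma at all but simply quotes it as \cite[Proposition 4.8]{VdRev11}. Your self-contained argument is the standard one and is essentially what one finds in the cited source.
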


\section{Proof of Theorem~\ref{t:main}}

Let $G$ be a minimal counterexample to Theorem~\ref{t:main}. Then $G$ is almost simple by Lemma~\ref{l:EPiCriterion}. Let $S$ be the minimal normal subgroup of $G$ and set $\tau_i=\pi_i\cap \pi(S)$ for $i=1, 2$.  Suppose that $\tau_1\subseteq\tau_2$. Then $\pi_1\cap\pi_2\cap\pi(S)=\tau_1$. Let $H$ be a Hall $\pi_1$-subgroup of $G$ and $L=HS$. Then $H\cap S$ is a Hall $\tau_1$-subgroup of $S$ and $(H\cap S)^S=(H\cap S)^L$ by Lemma~\ref{l:EPiAlmostSimple}. Let $\overline K$ be a Hall $\pi_1\cap\pi_2$-subgroup of $L/S$ and $K$ its preimage in $L$. Then $(H\cap S)^S=(H\cap S)^K$ and Lemma~\ref{l:EPiAlmostSimple} implies that $G\in E_{\pi_1\cap\pi_2}$. Thus we may assume that the intersection $\tau_1\cap\tau_2$ is distinct from both $\tau_1$ and $\tau_2$. In particular, $\tau_1$ and $\tau_2$ are both proper subsets of $\pi(S)$.

By Hall's theorem, it follows that all Hall $\pi_1$- and $\pi_2$-subgroups of $G$ are non-solvable (we will use only the fact that there are non-solvable Hall $\pi_1$- and $\pi_2$-subgroups, but not the absence of solvable ones). Then $S$ also contains non-solvable Hall $\pi_1$- and $\pi_2$-subgroups and either $S\not\in E_{\pi_1\cap\pi_2}$, or $S\in E_{\pi_1\cap\pi_2}$ but $G\not\in E_{\pi_1\cap\pi_2}$.

We consider nonabelian simple groups $S$ in turn. First, we extract sets $\tau$ such that $\tau\subset\pi(S)$ and $S\in E_\tau^{ns}$ from Table~1. Then we determine whether there are incomparable $\tau_1$ and  $\tau_2$ among them. If some incomparable $\tau_1$ and $\tau_2$ arise from the same line of Table~1 and this line has +, then $S$ has a conjugacy class of Hall ${\pi_1\cap\pi_2}$-subgroups which is also a conjugacy class in $\operatorname{Aut}(S)$. Hence $G\in E_{\pi_1\cap\pi_2}$ by Lemma~\ref{l:EPiAlmostSimple}. Thus we may assume that $\tau_1$ and $\tau_2$ arise either from the row not marked by +, or from the different rows.

If $S=Alt_n$, then $\tau=\pi((n-1)!)$, and there is nothing to prove. The only sporadic group with more than one set $\tau$ is $M_{23}$, in which case these sets $\tau$ form a nested chain. Hence $S$ is a~group of Lie type. Furthermore, by the last observation of the previous paragraph, we may assume that $S$ is classical.

%Observe that $S$ is not an alternating group. Indeed, %Proposition~\ref{p:ListOfNonsolvableHallSubgroups} implies that if an alternating group %$S=\operatorname{Alt}_n$ contains a proper non-solvable Hall $\pi$-subgroup then $%\pi\cap\pi(S)=\pi((n-1)!)$, hence there is at most one proper subset $\pi$ of $\pi(G)$ for %which a non-solvable Hall $\pi$-subgroups exists. This contradicts our assumption.

%Let $S$ be a sporadic simple group. According to %Proposition~\ref{p:ListOfNonsolvableHallSubgroups} the only group containing a non-%solvable Hall $\pi$-subgroup for more than one set $\pi$ is $M_{23}$ in which case all %such sets form a nested chain. So $S$ cannot be sporadic.

\textbf{1.} Let $S=A_{n-1}(q)$, where $q$ is a power of a prime $p$.

If $n=2$, then $\tau=\{2, 3, 5\}$ since $\operatorname{Sym}_n$ and $\operatorname{Sym}_{\lfloor\frac{n}{2}\rfloor}$ are solvable.

Assume that $n=4$. Then again $\operatorname{Sym}_n$ and $\operatorname{Sym}_{\lfloor \frac{n}{2}\rfloor}$ are solvable, and hence one of the following holds:
\begin{itemize}
\item $(6, q-1)=1$, $q>2$  and $\tau=\pi(S)\setminus\pi\left((q^2+q+1)(q^2+1)\right)$;

\item  $(8, q-5)_2=8$, $(q+1)_3=3$, $(q^2+1)_5=5$, and $\tau=\{2, 3, 5\}$.
\end{itemize}

Since the conditions $(6, q-1)=1$ and $(8, q-5)_2=8$ are not compatible, there is at most one $\tau$ for $S$.

Assume that $n=5$. Then four different types of $\tau$ can appear:

\begin{itemize}
\item $(5, q-1)=1$ and $\tau=\pi(S)\setminus\pi\left(\frac{q^5-1}{q-1}\right)$;

\item $(10, q-1)=1$ and $\tau=\pi(S)\setminus\pi\left(\frac{(q^5-1)(q^4-1)}{(q-1)(q^2-1)}\right)$;

\item $(30, q-1)=1$, $q>1$ and $\tau=\pi(S)\setminus\pi\left(\frac{(q^5-1)(q^4-1)(q^3-1)}{(q-1)^2(q^2-1)}\right)$;

\item $(12, q-1)=12$, $p\not\in\tau$ and $\{2,3\}\subseteq\tau\subseteq\pi(q-1)\cup\{5\}$.
\end{itemize}

Observe that the sets $\tau$ of types 1--3 form a nested chain. Hence we may assume that $(12, q-1)=12$. Since the row corresponding to the last type is marked by +, we may assume that only one of $\tau_1$ and $\tau_2$, say $\tau_2$, is of the last type. Then $(5, q-1)=1$,  $\tau_1=\pi(S)\setminus \pi\left(\frac{q^5-1}{q-1}\right)$ and $\tau_2\subseteq\pi(q-1)\cup\{5\}$. However, $\pi(q-1)\cup\{5\}\subseteq\pi(S)\setminus\pi\left(\frac{q^5-1}{q-1}\right)$. Indeed, every prime of the left-hand set divides $p(q^4-1)$ which in turn divides the order of $S$ and is coprime with $\frac{q^5-1}{q-1}$. It follows that $\tau_2\subseteq\tau_1$, contrary to our assumption.

Assume that $n=7$. Then one of the following holds:
\begin{itemize}

\item $(7, q-1)=1$ and $\tau=\pi(S)\setminus\pi\left(\frac{q^7-1}{q-1}\right)$;

\item $(35, q-1)=(3, q+1)=1$ and $\tau=\pi(S)\setminus\pi\left(\frac{(q^3+1)(q^5-1)(q^7-1)}{(q-1)^2(q+1)}\right)$;

\item $(12, q-1)=12$, $p\not\in\tau$, $\operatorname{Sym}_7\in E_\tau^{ns}$ and $\{2, 3\}\subseteq\tau\subseteq\pi(q-1)\cup\{5, 7\}$.
\end{itemize}

Note that $\operatorname{Sym}_7\in E_\tau^{ns}$ if and only if $\tau\cap\{2,3,5,7\}$ is either
$\{2,3,5,7\}$ or $\{2,3,5\}$. Arguing as in the case $n=5$, we are left with the situation when $\tau_1$ and $\tau_2$ are sets of types 2 and 3, respectively. Since $\tau_2\subseteq\pi(q-1)\cup\{5, 7\}$, either $\tau_2\subseteq\tau_1$, or $7$ divides $q^3+1$, $7\in\tau_2$ and $\tau_2\setminus\{7\}\subseteq\tau_1$. In the latter case, the set $\tau_1\cap\tau_2=\tau_2\setminus\{7\}$ also satisfies conditions of type 3, and so $S\in C_{\pi_1\cap\pi_2}$ by Corollary~\ref{c:SameRow} and $G\in E_{\pi_1\cap\pi_2}$ by Lemma~\ref{l:EPiAlmostSimple}.

Assume that $n=8$. There are at most two types of non-solvable Hall subgroups: one requires $(70, q-1)=1$, and another one requires $(12, q-1)=12$. These conditions are incompatible. So either $\tau$ is uniquely determined, or $\tau_1$ and $\tau_2$ arise form the same row marked by +.

Assume that $n=11$. Then one of the following holds:

\begin{itemize}
\item $(11, q-1)=1$ and $\tau=\pi(S)\setminus\pi\left(\frac{q^{11}-1}{q-1}\right)$;

\item $(462, q-1)=(5, q+1)=1$ and  $\tau=\pi(S)\setminus\pi\left(\frac{(q^5+1)(q^7-1)(q^8-1)(q^9-1)(q^{11}-1)}{(q-1)^2(q^3-1)(q^4-1)(q+1)}\right)$;

\item $p\not\in\pi$, $(12, q-1)=12$, $\operatorname{Sym}_{11}\in E^{ns}_\tau$, and $\{2, 3\}\subseteq\tau\subseteq\pi(q-1)\cup\{5, 7, 11\}$;

\item $p\not\in\tau$, $(3, q+1)=3$, $GL_2(q)\in E_\tau$, $\operatorname{Sym}_{5}\in E^{ns}_\tau$ and $\{2, 3\}\subseteq\tau\subseteq\pi(q^2-1)$.
\end{itemize}

The set $\tau$ of type 1 contains all sets of other types, while
$\tau$ of type 2 contains $\tau$ of type 4. Also the conditions of type 4 are incompatible with the conditions of both types 2 and 3. Arguing as in the case $n=7$, one can handle the situation when $\tau_1$ and $\tau_2$ are of types 2 and 3, respectively. So we may assume that both $\tau_1$ and $\tau_2$ are of type 4. Corollary~\ref{c:SameRow} implies that $S$ contains a Hall $\tau_1\cap\tau_2$-subgroup. Furthermore, $\operatorname{Sym}_5\in E_{\tau_1\cap\tau_2}$,
and hence $\tau_1\cap\tau_2$ contains $2$, $3$, and $5$. By \cite[Lemma 2.5 and Lemma 4.3(c)]{RevVd11}, it follows that  Hall $\tau_1\cap\tau_2$-subgroups of $S$ are conjugate if and only if Hall $\tau_1\cap\tau_2$-subgroups of $GL_2(q)$ are conjugate. Since $5\in \tau_1\cap\tau_2$, the latter is true in view of Lemma~\ref{l:GL2}.

For the remaining values of $n$, there are at most three types of $\tau$. Namely, one of the following holds:

\begin{itemize}
\item $n$ is an odd prime, $(n, q-1)=1$, and $\tau=\pi(S)\setminus\pi\left(\frac{q^{n}-1}{q-1}\right)$;

\item $p\not\in\tau$, $(12, q-1)=12$, $\operatorname{Sym}_{n}\in E^{ns}_\tau$,  and $\{2, 3\}\subseteq\tau\subseteq\pi(q-1)\cup\pi(n!)$;

\item $p\not\in\tau$, $(3, q+1)=3$, $GL_2(q)\in E_\pi$, $\operatorname{Sym}_{\lfloor\frac{n}{2}\rfloor}\in E^{ns}_\tau$ and $\{2, 3\}\subseteq\tau\subseteq\pi(q^2-1)$.
\end{itemize}

The conditions on divisibility by $3$ imply that types 2 and 3 are incompatible. Also the sets $\tau$ of types 2 and 3 are contained in $\pi(S)\setminus\pi\left(\frac{q^{n}-1}{q-1}\right)$. So $\tau_1$ and $\tau_2$ are of type 4, and we repeat the argument used for $n=11$.

\textbf{2.} Let $S={}^2A_{n-1}(q)$. Observe that the conditions of the three corresponding rows of Table 1 are not compatible. So we are left with the case when
$(3,q-1)=3$ and both $\tau_1$, $\tau_2$ satisfy

\begin{itemize}
 \item $GU_2(q)\in E_{\tau}$, $\operatorname{Sym}_{\lfloor\frac{n}{2}\rfloor}\in E^{ns}_{\tau}$ and $\{2, 3\}\subseteq\tau\subseteq\pi(q^2-1)$.
\end{itemize}

In this case, the argument used for $A_{10}(q)$ works.

\textbf{3.} Let $S=C_n(q)$. Then every proper Hall subgroup of $S$ is contained in the projective image $PM$ of a subgroup $M$ isomorphic to the wreath product $\operatorname{SL}_2(q)\wr\operatorname{Sym}_n$ by Proposition~\ref{p:ListOfNonsolvableHallSubgroups} and \cite[Theorem 8.13]{VdRev11}. Moreover, Hall subgroups of $S$ are conjugate if and only if they are conjugate in $M$. By Proposition~\ref{p:ListOfNonsolvableHallSubgroups}, $\tau_1$ and $\tau_2$ are subsets of $\pi(q^2-1)$, which is a subset of $\pi(\operatorname{SL}_2(q))$. Since we assume that $S$ contains non-solvable Hall $\tau_1$- and $\tau_2$-subgroups, Lemmas~\ref{l:SL2} and~\ref{l:Sym} imply that each $\tau_i$ is either a subset of $\pi(q-\varepsilon(q))$, or is equal to $\{2, 3, 5\}$. Hence at least one of $\tau_1$ and $\tau_2$ is a subset of $\pi(q-\varepsilon(q))$, so is $\tau_1\cap\tau_2$. It follows from \cite[Lemma 8.10]{VdRev11} that $\operatorname{SL}_2(q)$ has one or three conjugacy classes of Hall $\tau_1\cap\tau_2$-subgroups. In both cases there is a conjugacy class which remains a conjugacy class in the automorphism group of $\operatorname{SL}_2(q)$. Now \cite[Theorem 8.13]{VdRev11} implies that $S$ contains a class of Hall $\tau_1\cap\tau_2$-subgroups which is also a conjugacy class in~$\operatorname{Aut}(S)$. Hence $G\in E_{\pi_1\cap\pi_2}$ by Lemma~\ref{l:EPiAlmostSimple}. Thus $S$ is not a symplectic group.

\textbf{4.} If $S$ is orthogonal, then the conditions of different rows of Table 1 are not compatible and the rows in which $\tau$ is not uniquely determined are marked by +.

Theorem \ref{t:main} is proved.

\section{Acknowledgments}

The authors were supported by Fundmental Research Funds for the Central Universities-President's Fund(JUSRP202406006). The research was carried out by the first author within the framework of the Sobolev Institute of Mathematics state contract (project FWNF-2022-0002).


\begin{thebibliography}{list}



\bibitem{Hall28}
\emph{P. Hall}, A note on soluble groups, J. London Math. Soc., V. s1--3, Iss. 2 (1928), P. 98--105.

\bibitem{Hall37} \emph{P. Hall}, A characteristic property of soluble groups, J. London Math. Soc., V. s1--12, Iss. 3
(1937), P. 198--200.

\bibitem{Chun38} \emph{S. A. Chunikhin}, Soluble groups, Izv. Nauchno-Issledovatel’skogo Instituta Mat. Mekh. Tomsk. Univer., V. 2 (1938), P. 220--223 (in Russian).

\bibitem{Hall56} \emph{P. Hall}, Theorems like Sylow’s, Proc. London Math. Soc., V. s3-6, Iss. 2 (1956), P. 286--304.

\bibitem{AraWar82} \emph{Z. Arad, M. B. Ward}, New criteria for the solvability of finite groups, J. Algebra, V. 77, Iss. 1 (1982), P. 234--246.

\bibitem{Tyu02} \emph{V. N. Tyutyanov}, On a Hall hypothesis, Ukrainian Math. J., V. 54, no. 7 (2002), P. 1181--1191.

\bibitem{ButKhr20} \emph{A.A. Buturlakin, A.P. Khramova}, A criterion for the existence of a solvable $\pi$-Hall subgroup in a finite group, Comm. in Algebra, V. 48, Iss. 3 (2020), P. 1305--1313.

\bibitem{Kou} \emph{V. D. Mazurov and E. I. Khukhro (eds.)}, The Kourovka notebook. Unsolved
problems in group theory, 20th ed., https://arxiv.org/abs/1401.0300.

\bibitem{VdRev11} \emph{E. P. Vdovin, D. O. Revin}, Theorems of Sylow type, Rus.
Math. Surv., 2011, V. 66, Iss. 5, P. 829--870.

\bibitem{Guo15} \emph{W. Guo, D. O. Revin, E. P. Vdovin}, Confirmation for Wielandt's conjecture, J. of Algebra, V. 434 (2015), P. 193–206.

\bibitem{RevVd11} \emph{ D. O. Revin, E. P. Vdovin}, On the number of classes of conjugate Hall subgroups in finite simple groups, J. Algebra, 2010, V. 324, Iss. 12, P. 3614--3652.
\end{thebibliography}
\end{document}